\def\pmod #1{\ ({\rm{mod}}\ #1)}
\def\Z{\mathbb Z}
\def\1{{\mathbf 1}}
\def\pmod #1{\ ({\rm{mod}}\ #1)}
\def\floor #1{\left\lfloor{#1}\right\rfloor}
\theoremstyle{plain}
\newtheorem{Thm}{Theorem}
\newtheorem{Lem}{Lemma}
\theoremstyle{definition}
\newtheorem*{Ack}{Acknowledgment}
\theoremstyle{remark}
\begin{document}

\title{Three supercongruences for Ap\'ery numbers or
Franel numbers}
\author{Yong Zhang}
\email{yongzhang1982@163.com}
\address{Department of Mathematics and Physics, Nanjing Institute of Technology,
Nanjing 211167, People's Republic of China}
\keywords{supercongruences; Ap\'ery numbers; Franel numbers; $p$-adic
valuations}
\subjclass[2010]{Primary 11A07, 11B65; Secondary 05A10, 11B39, 11B75.
}
\thanks{
The work is supported by National Natural Science Foundation of China (Grant No. 11971222 and 12071208) and  Natural Science Foundation of  Nanjing Institute of Technology (No. CKJB201807).}
\begin{abstract}
The Ap\'ery numbers $A_n$ and the Franel numbers $f_n$ are defined by
$$A_n=\sum_{k=0}^{n}{\binom{n+k}{2k}}^2{\binom{2k}{k}}^2\ \ \ \ \ {\rm and }\ \ \ \ \ \ f_n=\sum_{k=0}^{n}{\binom{n}{k}}^3(n=0, 1, \cdots,).$$
In this paper, we prove three supercongruences for  Ap\'ery numbers or
Franel numbers conjectured by Z.-W. Sun.
Let $p\geq 5$ be a prime and let $n\in \Z^{+}$. We show that
\begin{align} \notag
\frac{1}{n}\bigg(\sum_{k=0}^{pn-1}(2k+1)A_k-p\sum_{k=0}^{n-1}(2k+1)A_k\bigg)\equiv0\pmod{p^{4+3\nu_p(n)}}
\end{align}
and
\begin{align}\notag
\frac{1}{n^3}\bigg(\sum_{k=0}^{pn-1}(2k+1)^3A_k-p^3\sum_{k=0}^{n-1}(2k+1)^3A_k\bigg)\equiv0\pmod{p^{6+3\nu_p(n)}},
\end{align}
where $\nu_p(n)$ denotes the $p$-adic order of $n$.
Also, for any prime $p$ we have 
\begin{align}  \notag
\frac{1}{n^3}\bigg(\sum_{k=0}^{pn-1}(3k+2)(-1)^kf_k-p^2\sum_{k=0}^{n-1}(3k+2)(-1)^kf_k\bigg)\equiv0\pmod{p^{3}}.
\end{align}
\end{abstract}
\maketitle

\section{Introduction}
\setcounter{equation}{0}
\setcounter{Thm}{0}
\setcounter{Lem}{0}
\setcounter{Cor}{0}
\setcounter{Conj}{0}

The Ap\'ery numbers are defined as
$$A_n=\sum_{k=0}^{n}{\binom{n}{k}}^2{\binom{n+k}{k}}^2=\sum_{k=0}^{n}{\binom{n+k}{2k}}^2{\binom{2k}{k}}^2\ \ (n=0, 1, \cdots),$$
which play a central role in Ap\'ery's proof of the irrationality of $\zeta(3)=\sum_{n=1}^{\infty}\frac{1}{n^3}$ (see Ap\'ery \cite{Ap}).

In 2002, Z.-W. Sun \cite{ZWSF2} introduced Ap\'ery polynomials $$A_n(x)=\sum_{k=0}^{n}{\binom{n}{k}}^2{\binom{n+k}{k}}^2x^k=\sum_{k=0}^{n}{\binom{n+k}{2k}}^2{\binom{2k}{k}}^2x^k\ \ (n=0, 1, \cdots).$$
Clearly, $A_n(1)=A_n$. Z.-W. Sun \cite[Theorem 1.1 (ii)]{ZWSF2} showed that for any positive integer $n$ and
integer $x$,
$$\sum_{k=0}^{n-1}(2k+1)A_k(x)\equiv 0\pmod{n}$$
and for any prime $p\geq 5$,
\begin{align}\label{fangc}\sum_{k=0}^{p-1}(2k+1)A_k\equiv p+\frac{7}{6}p^4B_{p-3}\pmod{p^5},\end{align}
where 
the $n$th Bernoulli number $B_n$ is defined by $$B_0=1,\ \ \ \ \ \ \sum_{k=0}^{n-1}\binom{n}{k}B_k=0\ (n\geq 2).$$

Motivated by Z.-W. Sun's work, Guo and Zeng \cite[Theorem 1.3]{VJWGJZF2} made use of some combinatorial identities and $q$-congruences to prove that 
$$\sum_{k=0}^{n-1}(2k+1)^3A_k\equiv0 \pmod{n^3}$$
and
\begin{align} \label{fangc1}\sum_{k=0}^{p-1}(2k+1)^3A_k\equiv p^3 \pmod{p^6}.\end{align}

 In this paper we first prove the following stronger results related
to Ap\'ery numbers conjectured by Z.-W. Sun \cite[(4.2) and (4.3) of Conjecture 56]{ZWSF1}.

\begin{Thm}\label{th1}
Let $p\geq 5$ be a prime and let $n\in \Z^{+}$. Then
\begin{align} \label{fangc2}
\frac{1}{n}\bigg(\sum_{k=0}^{pn-1}(2k+1)A_k-p\sum_{k=0}^{n-1}(2k+1)A_k\bigg)\equiv0\pmod{p^{4+3\nu_p(n)}}
\end{align}
and
\begin{align} \label{fangc6}
\frac{1}{n^3}\bigg(\sum_{k=0}^{pn-1}(2k+1)^3A_k-p^3\sum_{k=0}^{n-1}(2k+1)^3A_k\bigg)\equiv0\pmod{p^{6+3\nu_p(n)}}.
\end{align}
\end{Thm}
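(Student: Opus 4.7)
I would decompose both supercongruences via the \emph{block} division $k = jp + r$ with $0 \le r \le p-1$ and $0 \le j \le n-1$. For \eqref{fangc2} this rewrites the left-hand side as $\sum_{j=0}^{n-1}G_j$, where
\begin{align*}
G_j := \sum_{r=0}^{p-1}(2(jp+r)+1)A_{jp+r} - p(2j+1)A_j,
\end{align*}
and similarly \eqref{fangc6} becomes $\sum_{j=0}^{n-1} H_j$ with a cubic block $H_j$. Since $\frac{1}{n}X \equiv 0 \pmod{p^{4+3\nu_p(n)}}$ is equivalent to $\nu_p(X) \ge 4+4\nu_p(n)$ (using that $n \mid X$, guaranteed by Sun's earlier divisibility result for $(2k+1)A_k(x)$), the goal becomes $\nu_p(\sum_j G_j) \ge 4+4\nu_p(n)$, and likewise $\nu_p(\sum_j H_j) \ge 6+6\nu_p(n)$.

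\textbf{Block supercongruence.} The key technical ingredient will be a ``Lucas--like'' supercongruence
\begin{align*}
A_{jp+r} \equiv A_j A_r \pmod{p^{3}}
\end{align*}
of Gessel--Beukers--Coster flavour, refined to the $p^5$ level with explicit correction terms $C_1(j,r), C_2(j,r)$ involving $p$-adic harmonic sums, together with Coster's theorem $A_{mp^s} \equiv A_{mp^{s-1}} \pmod{p^{3s}}$ which supplies extra $p^{3\nu_p(j)}$ gains. Substituting such an expansion into $G_j$ and writing $2(jp+r)+1 = (2r+1)+2jp$, the block rearranges as
\begin{align*}
G_j = A_j\Big(\sum_{r=0}^{p-1}(2r+1)A_r - p\Big) + 2jp A_j\sum_{r=0}^{p-1}A_r + (\text{correction terms}),
\end{align*}
where the first parenthesis is $\tfrac76 p^4 B_{p-3}$ modulo $p^5$ by \eqref{fangc} and the second is controlled by an auxiliary congruence for $\sum_{r=0}^{p-1}A_r$ to a suitable $p$-adic precision. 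Together these should yield $\nu_p(G_j) \ge 5$ unconditionally, with Coster's extra $p^{3\nu_p(j)}$ gain on top.

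\textbf{Induction on $\nu_p(n)$ and cubic case.} With the block lemma in hand, \eqref{fangc2} follows for $\gcd(n,p)=1$ by direct summation. For $\nu_p(n) > 0$ I would induct on $\nu_p(n)$, reorganizing $\sum_{j=0}^{n-1}G_j$ so that the contribution of $j$ with $\nu_p(j) = a$ receives the full $p^{5+3a}$ strength from the refined block congruence, and the total $p$-adic valuation of the sum reaches $4 + 4\nu_p(n)$. The cubic analog \eqref{fangc6} proceeds by the same template: expand $(2(jp+r)+1)^3$ as a polynomial in $jp$, use \eqref{fangc1} for the $(2r+1)^3 A_r$ slice, and control the auxiliary sums $\sum_{r=0}^{p-1}(2r+1)^i A_r$ for $i=0,1,2$ via Wolstenholme-type identities.

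\textbf{Main obstacle.} The hardest step is the refined block supercongruence: pushing $A_{jp+r} \equiv A_j A_r \pmod{p^3}$ to precision $p^5$ with correction terms explicit enough to sum over $r$, while simultaneously extracting Coster's $p^{3\nu_p(j)}$ gain in a form compatible with the block decomposition. A secondary hurdle is the set of auxiliary congruences for $\sum_{r=0}^{p-1}(2r+1)^i A_r$ and $\sum_{r=0}^{p-1} A_r$, especially for \eqref{fangc6} where $i$ ranges over $0,1,2$ alongside the key case $i = 3$ treated in \eqref{fangc1}. An alternative route, closer to Guo--Zeng \cite{VJWGJZF2}, would establish $q$-analogues of the block identities and specialize $q \to 1$; this can sometimes sidestep the harmonic-sum bookkeeping but requires its own combinatorial scaffolding.
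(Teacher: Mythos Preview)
Your block approach is fundamentally different from the paper's, and it rests on an ingredient that is false. The paper never analyzes $A_{jp+r}$ termwise; it invokes Sun's identity $\frac{1}{n}\sum_{k=0}^{n-1}(2k+1)A_k=n\sum_{k}\frac{1}{2k+1}\binom{n-1}{k}^{2}\binom{n+k}{k}^{2}$ and Guo--Zeng's analogue for $(2k+1)^3$, both of which already exhibit explicit powers of $n$. Writing $n=p^{r}m$ with $p\nmid m$, it subtracts the two levels and iteratively reduces the products $\binom{p^{r-s}m-1}{k}\binom{p^{r-s}m+k}{k}$ modulo $p^{4(r-s)}$ via a Beukers-type congruence, together with harmonic-sum lemmas such as $\sum_{\lfloor k/p^{r}\rfloor=l,\,p\nmid(2k+1)}\frac{1}{2k+1}\equiv0\pmod{p^{2r}}$. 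The entire $3\nu_p(n)$ gain comes from this iterated binomial reduction, not from properties of individual Ap\'ery numbers.

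Your key ingredient, the supercongruence $A_{jp+r}\equiv A_jA_r\pmod{p^3}$, fails in general: for $p=5$, $j=r=1$ one has $A_6=21460825$, $A_1^{2}=25$, and $A_6-A_1^{2}=21460800=2^{6}\cdot3\cdot5^{2}\cdot17\cdot263$, so $\nu_5(A_6-A_1^2)=2$. The $p^3$-level results of Gessel, Beukers and Coster apply only to the residues $r=0$ and $r=p-1$; for $1\le r\le p-2$ only the Lucas congruence modulo $p$ is available, so any expansion of $A_{jp+r}$ around $A_jA_r$ must carry nontrivial corrections already at order $p^2$, which wrecks the subsequent summation over $r$. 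Two further issues: the claim $\nu_p(G_j)\ge5$ is also wrong, since $G_0=\sum_{r=0}^{p-1}(2r+1)A_r-p\equiv\tfrac{7}{6}p^4B_{p-3}\pmod{p^5}$ by \eqref{fangc}, giving $\nu_p(G_0)=4$ generically; and even a correct termwise bound of the form $\nu_p(G_j)\ge4+3\nu_p(j)$ would yield only $\nu_p\bigl(\sum_{j<p^am}G_j\bigr)\ge4$ (most $j$ have $\nu_p(j)=0$), far short of the required $4+4a$. Reaching that target demands global cancellation across blocks, which your induction sketch does not supply and which, in the paper's argument, is exactly what the closed-form identities make visible.
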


Note that the supercongruence (\ref{fangc2}) with $n=1$ yields the slightly weaker version of (\ref{fangc}). Letting $n=1$ in (\ref{fangc6}) gives (\ref{fangc1}). 
Our proofs of (\ref{fangc2}) and (\ref{fangc6}) are based on two identities due to Z.-W. Sun, Guo and Zeng, respectively.

Given a prime $p$ and a positive integer $k$, for a sequence $\{a_n\}_{n\geq 0}$ of integers, the congruence is called Dwork's type congruence, or Atkin and Swinnerton-Dyer type congruence if 
$$
a_{np^{}}\equiv \gamma_p\cdot a_{n}\pmod{p^{k\nu_p(n)}},\qquad\forall n\geq 1.
$$
(\ref{fangc2}) and (\ref{fangc6}) are congruences of this type. One can refer to \cite{ASD, FB, BD} for Dwork's type congruences and \cite{VJWG} for $q$-analogues of Dwork-type supercongruences.

For all nonnegative integers $n$, the Franel numbers are given by
$$f_n=\sum_{k=0}^{n}{\binom{n}{k}}^3,$$ which were first introduced by Franel \cite{JFranel}. In 2013, Guo \cite[Theorems 1.1 and 1.2]{VJWGJZF1} proved that for any $n\in\Z^{+}$ and prime $p\geq 5$,
$$\sum_{k=0}^{n-1}(3k+2)(-1)^kf_k\equiv0 \pmod{n^2}$$
and
$$\sum_{k=0}^{p-1}(3k+2)(-1)^kf_k\equiv2p^2(2^p-1)^2 \pmod{p^5},$$
which were originally conjectured by Z.-W. Sun \cite {ZWSF4}. The second aim of this paper is to prove the following supercongruence, which was conjectured by Z.-W. Sun \cite[(4.5)]{ZWSF1}.
\begin{Thm}\label{th2}Let $p$ be a prime and let $n\in \Z^{+}$. Then
\begin{align} \label{fangc3}
\frac{1}{n^3}\bigg(\sum_{k=0}^{pn-1}(3k+2)(-1)^kf_k-p^2\sum_{k=0}^{n-1}(3k+2)(-1)^kf_k\bigg)\equiv0\pmod{p^{3}}.
\end{align}
\end{Thm}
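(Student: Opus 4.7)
The plan is to mirror the strategy used for Theorem~\ref{th1}: first reduce the alternating Franel sum to a closed-form or telescoping identity, and then perform $p$-adic analysis on the reduced expression. The natural starting point is the identity Guo used in his 2013 proof of the divisibility $\sum_{k=0}^{n-1}(3k+2)(-1)^k f_k\equiv 0\pmod{n^2}$, which writes
\begin{equation*}
S(N) := \sum_{k=0}^{N-1}(3k+2)(-1)^k f_k = N^2\,V(N)
\end{equation*}
with $V(N)$ an explicit sum of products of binomial coefficients (so $V(N)\in\Z$ automatically). Substituting into the left-hand side of \eqref{fangc3} gives
\begin{equation*}
\frac{S(pn)-p^2 S(n)}{n^3} = \frac{(pn)^2 V(pn) - p^2 n^2 V(n)}{n^3} = \frac{p^2\bigl(V(pn) - V(n)\bigr)}{n},
\end{equation*}
so Theorem~\ref{th2} is equivalent to the Dwork-type congruence
\begin{equation*}
V(pn) \equiv V(n) \pmod{pn}.
\end{equation*}

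To establish this, I would split the index $k$ in the sum defining $V(pn)$ via $k = pj+r$ with $0\le j<n$ and $0\le r<p$, and expand each binomial factor using Lucas-type congruences
$\binom{pa+b}{pc+d}\equiv \binom{a}{c}\binom{b}{d}\pmod{p}$
and their higher-order $p$-adic refinements, combined with the Lucas-type congruence $f_{pa+b}\equiv f_a f_b\pmod{p}$ for Franel numbers (which follows by cubing the standard Lucas congruence and summing). The $r=0$ slice, after summation over $j$, should recover a copy of $V(n)$, while the $1\le r<p$ slices should collapse to error terms that are individually divisible by $p$ and, after telescoping in $j$, collectively divisible by $pn$.

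The main obstacle will be upgrading the easy mod-$p$ congruence $V(pn)\equiv V(n)\pmod p$ to the required mod-$pn$ congruence, that is, extracting the additional factor of $n$ on top of the factor of $p$. A naive Lucas expansion yields only mod-$p$ information; to gain the $n$, one must either push the binomial expansion to one higher order in $p$ and exhibit a cancellation of the linear-in-$p$ error terms, or rearrange the double sum to expose a telescoping structure (in $j$) from which the factor of $n$ emerges naturally. This bookkeeping, guided by the precise shape of Guo's identity, is the technical heart of the argument; once it is in place, a quick $p$-adic check completes the proof.
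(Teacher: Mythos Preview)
Your overall strategy---apply Guo's identity \eqref{fangc7} to write $S(N)=(-1)^N N^2 V(N)$ with $V(N)\in\Z$, reduce the theorem to a Dwork-type congruence for $V$, and then analyse the binomial sums $p$-adically---is exactly the route the paper takes. However, several points in your outline are off and would derail the execution.

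First, once Guo's identity is invoked, $V(N)$ involves only $\binom{N-1}{k-1}^3$ and the rational factor $(N^2-4k^2)/k^3$; no Franel numbers remain. The congruence $f_{pa+b}\equiv f_a f_b\pmod p$ therefore has no role here and should be dropped from the plan.

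Second, the reduction is not to $V(pn)\equiv V(n)\pmod{pn}$. The theorem is a $p$-adic statement, so what is actually required is $V(pn)\equiv V(n)\pmod{p^{1+\nu_p(n)}}$. In particular, when $p\nmid n$ the target is only $\pmod p$, and there is no ``additional factor of $n$'' to extract---your easy Lucas step already finishes that case. The real obstacle is the case $p\mid n$, where one needs $\nu_p(n)$ extra powers of $p$. The paper obtains these not by any telescoping in $j$ but by invoking the refined binomial congruences of Beukers and Jacobsthal (Lemmas~\ref{l1} and~\ref{l2}) together with the harmonic-sum estimate \eqref{tfangc10}; plain Lucas is too weak.

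Third, Guo's identity carries the global sign $(-1)^N$. For $p=2$ and $n$ odd one has $(-1)^{pn}\ne(-1)^n$, which produces an explicit correction term (the $[r=1][p=2]\cdot 2p^2/m$ in the paper's display \eqref{fangc11}) that must be disposed of by a separate parity argument. Your outline omits this, and without it the $p=2$ case does not go through.
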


Our proof of (\ref{fangc3}) makes use of a identity obtained by Guo. The remainder of the paper is organized as follows. In the next section, we give some auxiliary lemmas. The proofs of Theorems 1.1 and 1.2 will be given in Section 3.

\section{Some Lemmas}
\setcounter{equation}{0}
\setcounter{Thm}{0}

\setcounter{Lem}{0}
\setcounter{Cor}{0}
\setcounter{Conj}{0}
In the following section, for an assertion $A$ we adopt the notation:
$$[A]=\begin{cases}1,&\text{if A holds,} \\
0,&\text{otherwise}.\end{cases} $$
We see that $[m=n]$ coincides with the Kronecker symbol $\delta_{m,n}$. In order to prove Theorems 1.1 and 1.2, we first establish the following auxiliary lemmas.
\begin{Lem}
\label{l1}
Let $n, k, r$ be positive integers and $p$ be a prime. Then
\begin{equation}\label{fangc15}
\binom{p^{r}n-1}{k}\equiv \binom{p^{r-1}n-1}{\floor {\frac{k}{p}}}(-1)^{k-\floor {\frac{k}{p}}}\bigg(1-np^{r}\sum_{j=1, p\nmid j}^{k}\frac{1}{j}\bigg)\pmod{p^{2{r}}}.
\end{equation}
\end{Lem}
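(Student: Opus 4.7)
\medskip

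The plan is a direct computation starting from the factorization
\[
\binom{p^{r}n-1}{k}=\prod_{j=1}^{k}\frac{p^{r}n-j}{j}=(-1)^{k}\prod_{j=1}^{k}\Bigl(1-\frac{p^{r}n}{j}\Bigr).
\]
Split the index set $\{1,2,\dots,k\}$ according to whether $p\mid j$ or $p\nmid j$, so that the product becomes a product of two factors.

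For the indices with $p\mid j$, write $j=pm$ with $1\le m\le\lfloor k/p\rfloor$; then $p^{r}n/j=p^{r-1}n/m$, and the corresponding factor telescopes back into a binomial coefficient:
\[
\prod_{m=1}^{\lfloor k/p\rfloor}\Bigl(1-\frac{p^{r-1}n}{m}\Bigr)
=(-1)^{\lfloor k/p\rfloor}\prod_{m=1}^{\lfloor k/p\rfloor}\frac{p^{r-1}n-m}{m}
=(-1)^{\lfloor k/p\rfloor}\binom{p^{r-1}n-1}{\lfloor k/p\rfloor}.
\]
Combining this with the overall $(-1)^{k}$ gives the sign $(-1)^{k-\lfloor k/p\rfloor}$ and the binomial coefficient that appears on the right-hand side of \eqref{fangc15}.

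For the indices with $p\nmid j$, every $1/j$ is a $p$-adic unit, so expanding the product
\[
\prod_{\substack{1\le j\le k\\ p\nmid j}}\Bigl(1-\frac{p^{r}n}{j}\Bigr)
=1-p^{r}n\sum_{\substack{1\le j\le k\\ p\nmid j}}\frac{1}{j}
+(p^{r}n)^{2}\sum_{\substack{i<j\\ p\nmid ij}}\frac{1}{ij}-\cdots
\]
makes sense in $\Z_{p}$. Every summand beyond the linear one carries a factor of $(p^{r}n)^{2}$, hence vanishes modulo $p^{2r}$. Assembling the two pieces yields \eqref{fangc15}.

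There is no serious obstacle: the argument is just a careful separation of the product into $p$-divisible and $p$-prime parts, together with a first-order $p$-adic expansion. The only points requiring care are the bookkeeping of the sign $(-1)^{k-\lfloor k/p\rfloor}$ (where one uses $(-1)^{2\lfloor k/p\rfloor}=1$) and the observation that the denominators $j$ with $p\nmid j$ are units in $\Z_{p}$, which is what lets one truncate the expansion at the linear term modulo $p^{2r}$.
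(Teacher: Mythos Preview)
Your proof is correct. The paper does not actually prove this lemma; it simply cites it as Beukers' result \cite[Lemma 2 (i)]{FB}, and your direct argument---splitting the product $\prod_{j=1}^{k}(1-p^{r}n/j)$ into the $p\mid j$ and $p\nmid j$ parts and expanding the latter to first order in $p^{r}n$---is exactly the standard proof that Beukers gives.
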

The congruence (\ref{fangc15}) is the result of Beukers \cite[Lemma 2 (i)]{FB}.
\begin{Lem}\label{l2}\cite[Lemma 2.1 and proof of Theorem 1.3]{ROBSAS}\label{l2} Let $p$ be a prime. Then, for any integers $a, b$ and positive integers $r, s,$ we have
\begin{align}\label{tfangc14}
\binom{p^ra}{p^sb}/\binom{p^{r-1}a}{p^{s-1}b}\equiv(-1)^{(p-1)p^{s-1}b}\pmod {p^{r+s+min\{r, s\}-\delta_{p,3}-2\delta_{p,2}}}.
\end{align}
\end{Lem}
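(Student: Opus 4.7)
The plan is to reduce Lemma~\ref{l2} to the classical Jacobsthal--Kazandzidis congruence by peeling off the $p$-parts of the three factorials appearing in the binomial coefficient. Without loss of generality assume $r\ge s$, so that $\min\{r,s\}=s$. For any nonnegative integer $m$, the identity $m!=\lfloor m/p\rfloor!\cdot p^{\lfloor m/p\rfloor}\cdot W(m)$, where $W(m):=\prod_{1\le k\le m,\,p\nmid k}k$, applied to the three factorials in $\binom{p^ra}{p^sb}$ (noting $p^ra-p^sb=p(p^{r-1}a-p^{s-1}b)$), yields the clean telescoping identity
$$
\frac{\binom{p^ra}{p^sb}}{\binom{p^{r-1}a}{p^{s-1}b}}=\frac{W(p^ra)}{W(p^sb)\,W(p^ra-p^sb)}.
$$

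Next I would invoke the sharp Jacobsthal--Kazandzidis congruence in the form
$$
\binom{pA}{pB}\Big/\binom{A}{B}\equiv 1\pmod{p^{3+\nu_p(AB(A-B))}},
$$
which is valid for $p\ge 5$, with the exponent decreased by $1$ when $p=3$ and by $3$ when $p=2$ (in the last case the ratio is only $\pm 1$ modulo the stated power, which is precisely the source of the nontrivial sign). Setting $A=p^{r-1}a$ and $B=p^{s-1}b$, one computes $A-B=p^{s-1}(p^{r-s}a-b)$, hence
$$
\nu_p(AB(A-B))\ge (r-1)+(s-1)+(s-1)=r+2s-3.
$$
Substituting back shows
$$
\frac{\binom{p^ra}{p^sb}}{\binom{p^{r-1}a}{p^{s-1}b}}\equiv (-1)^{(p-1)p^{s-1}b}\pmod{p^{3+r+2s-3}}=p^{r+s+\min\{r,s\}}
$$
for $p\ge 5$ (where $(-1)^{(p-1)p^{s-1}b}=1$ automatically), and modulo the prescribed smaller powers when $p\in\{2,3\}$.

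The main obstacle is the small-prime analysis $p\in\{2,3\}$: both the exact loss in the $p$-adic exponent and the appearance of the sign $(-1)^{(p-1)p^{s-1}b}$ stem from the weaker Wolstenholme/Wilson refinements available for $W(pm)\pmod{p^c}$ in those cases. The cleanest route is to cite [ROBSAS, Lemma 2.1] directly; alternatively, one can proceed by induction on $\min\{r,s\}$, using the base case $s=1$ (a sharp Kazandzidis congruence with its exponent tracked case-by-case for $p=2,3,\ge 5$) and invoking the telescoping identity above to advance each inductive step, at each stage accumulating the correct $\delta_{p,3}$ and $2\delta_{p,2}$ corrections.
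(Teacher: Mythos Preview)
The paper does not actually prove Lemma~\ref{l2}: it is stated with a citation to Osburn--Sahu--Straub, followed only by the historical remark that the nonnegative case for $p\ge 5$ is due to Gessel and Granville and that Straub handled the extension to negative integers. There is thus no in-paper argument to compare your proposal against; your sketch is essentially the classical Jacobsthal--Kazandzidis route that underlies those references.

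Two small issues with your write-up. First, the $W$-product identity you derive is just a rewriting of the ratio $\binom{pA}{pB}/\binom{A}{B}$ with $A=p^{r-1}a$, $B=p^{s-1}b$; since you then invoke Jacobsthal--Kazandzidis on that same ratio, the factorial decomposition is redundant. Second, your $p=2$ bookkeeping is off by one: a loss of $3$ in the Jacobsthal exponent would give modulus $2^{r+2s-3}$, whereas the lemma asserts $2^{r+s+\min\{r,s\}-2}=2^{r+2s-2}$ (under your assumption $r\ge s$), so the correct loss is $2$. Finally, your factorial identity $m!=\lfloor m/p\rfloor!\,p^{\lfloor m/p\rfloor}W(m)$ presupposes $a,b\ge 0$; the lemma is stated for all integers $a,b$, and the negative case requires the separate argument of Straub that you do not supply (though the paper's applications of this lemma in fact only use nonnegative parameters).
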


Jacobsthal's binomial congruence (\ref{tfangc14}) with nonnegative $a, b$ was proved by Gessel \cite{IMG} and Granville \cite{AG} for $p\geq 5$ respectively. Straub \cite{AS} showed the extension to negative integers.

The following curious result is also due to  R. Osburn, B. Sahu and A. Straub:
\begin{Lem}[ {\cite[Lemma 2.2]{ROBSAS}}]
\label{l3}
Let $p$ be a prime and n an integer with $p-1\nmid n$. Then, for all integers $r\geq 0$,
\begin{align}\label{fangc24}\sum_{k=1,p\nmid k}^{p^r-1}k^n\equiv 0\pmod{p^r}.
\end{align}
If, additionally, $n$ is even, then, for primes $p\geq 5$,
\begin{align}\label{fangc25}\sum_{k=1,p\nmid k}^{\frac{p^r-1}{2}}\frac{1}{k^n}\equiv 0\pmod{p^r}.
\end{align}
\end{Lem}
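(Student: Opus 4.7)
Both congruences in Lemma~\ref{l3} admit elementary proofs based on the structure of the multiplicative group $(\Z/p^r\Z)^\times$; my plan is to treat them in turn.

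For (\ref{fangc24}), the plan is a primitive root argument. The case $p=2$ is vacuous since then $p-1=1$ divides every $n$, so we may assume $p$ is odd, in which case $(\Z/p^r\Z)^\times$ is cyclic of order $\phi(p^r)=p^{r-1}(p-1)$. Fixing a generator $g$, as $k$ runs over the integers in $[1,p^r-1]$ coprime to $p$ the residues $k\bmod p^r$ are exactly $g,g^2,\ldots,g^{\phi(p^r)}$, whence
$$\sum_{k=1,\,p\nmid k}^{p^r-1}k^n\equiv g^n\cdot\frac{g^{n\phi(p^r)}-1}{g^n-1}\pmod{p^r}.$$
The numerator vanishes modulo $p^r$ because $g^{\phi(p^r)}\equiv 1\pmod{p^r}$, while $p-1\nmid n$ forces $g^n\not\equiv 1\pmod p$ (since $g$ has order exactly $p-1$ modulo $p$), so $g^n-1$ is a unit in $\Z/p^r\Z$. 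Dividing yields the claim.

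For (\ref{fangc25}), the plan is to combine the symmetry $k\mapsto p^r-k$ with part (1). Let $T=\sum_{k=1,\,p\nmid k}^{p^r-1}1/k^n$ and split it at $(p^r-1)/2$. Substituting $k\mapsto p^r-k$ in the upper half and expanding $(p^r-k)^n$ by the binomial theorem, the evenness of $n$ gives $(p^r-k)^n\equiv k^n\pmod{p^r}$; inverting, the upper half is congruent to the lower half, so $T\equiv 2S\pmod{p^r}$, where $S$ is the target sum. On the other hand, the map $k\mapsto k^{-1}$ is a bijection of $(\Z/p^r\Z)^\times$, so $T\equiv\sum_{k'}(k')^n\equiv 0\pmod{p^r}$ by part (1). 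Since $p\geq 5$ makes $2$ invertible modulo $p^r$, we conclude $S\equiv 0\pmod{p^r}$.

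The chief delicacy is the identity $(p^r-k)^n\equiv k^n\pmod{p^r}$: only the evenness of $n$ cancels the sign from $(-k)^n$, while the remaining binomial terms each carry at least one factor of $p^r$. The restriction $p\geq 5$ enters through the inversion of $2$, but is in fact forced by the joint hypotheses: for $p=3$ and $n$ even, $p-1=2\mid n$ would render part (1) inapplicable and the statement vacuous.
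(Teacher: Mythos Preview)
The paper does not supply its own proof of this lemma; it is quoted verbatim from Osburn--Sahu--Straub \cite[Lemma~2.2]{ROBSAS} and used as a black box in the proof of Lemma~\ref{l4}. There is therefore nothing in the paper to compare your argument against.

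Your proof is correct. The primitive-root/geometric-series argument for (\ref{fangc24}) is the standard one: once $p-1\nmid n$ guarantees that $g^n-1$ is a unit modulo $p^r$, the identity $(g^n-1)\sum_{j=1}^{\phi(p^r)}g^{jn}=g^n\bigl(g^{n\phi(p^r)}-1\bigr)\equiv 0$ forces the sum to vanish. For (\ref{fangc25}) your two steps are exactly right: the substitution $k\mapsto p^r-k$ together with $(p^r-k)^n\equiv(-k)^n=k^n\pmod{p^r}$ for even $n$ shows the full sum $T$ over $1\le k\le p^r-1$ is $\equiv 2S$, while the bijection $k\mapsto k^{-1}$ on $(\Z/p^r\Z)^\times$ identifies $T$ with the sum in (\ref{fangc24}), which vanishes. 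Dividing by the unit $2$ (here is where $p\ge 5$ is used) finishes. Your remark that $p=3$ is in any case vacuous under the joint hypotheses is a nice observation.

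This is essentially the classical argument and almost certainly coincides with the proof in \cite{ROBSAS}; in the context of the present paper it simply fills in a citation.
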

\begin{Lem}\label{l4}Let $p\geq 5$ be a prime. For any positive integer $r$ and nonnegative integer $l$, we have
\begin{align}\label{fangc20}\sum_{\lfloor \frac{k}{p^r}\rfloor=l,p\nmid(2k+1)}\frac{1}{2k+1}\equiv 0\pmod{p^{2r
}},
\end{align}
\begin{align}\label{fangc35}
\sum_{\lfloor \frac{k}{p^r}\rfloor=l, p\nmid{(k+1)}}\frac{1}{k+1}\equiv 0\pmod{p^{2r}}
\end{align}
and
\begin{align}\label{fangc44}
\sum_{\lfloor\frac{k}{p^r}\rfloor=l}\sum_{j=1,p\nmid j}^{k}\frac{1}{j^2}\equiv 0\pmod{p^{2r}}.
\end{align}
\end{Lem}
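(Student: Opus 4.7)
The three congruences in Lemma~\ref{l4} share a common structure: each is a sum of reciprocals over the length-$p^r$ window of indices $k$ with $\lfloor k/p^r\rfloor=l$. My plan is, in each case, to shift the index to land in $[1,p^r-1]$, expand each reciprocal as a geometric series truncated modulo $p^{2r}$, and then control the resulting coefficient sums of $m^{-n}$ via (i) a pairing argument ($m\leftrightarrow p^r-m$ or $m\leftrightarrow 2p^r-m$), and (ii) the Wolstenholme-type fact, available because $p\ge 5$, that $\sum_{a\in(\Z/p^r\Z)^\times}a^{-n}\equiv\sum a^{n}\equiv 0\pmod{p^r}$ whenever $p-1\nmid n$, which follows from the inversion bijection $a\mapsto a^{-1}$ combined with Lemma~\ref{l3} (or a direct Faulhaber computation).

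For (\ref{fangc20}) I set $k=lp^r+j$ with $j\in[0,p^r-1]$, note that $p\nmid(2k+1)$ iff $p\nmid(2j+1)$, and use
\begin{align*}
\frac{1}{2lp^r+(2j+1)}\equiv\frac{1}{2j+1}-\frac{2lp^r}{(2j+1)^2}\pmod{p^{2r}}.
\end{align*}
Since $\{2j+1\}$ is a complete set of residues modulo $p^r$, the sum $\sum 1/(2j+1)^2$ equals $\sum_{a\in(\Z/p^r\Z)^\times}a^{-2}\equiv\sum a^2\equiv 0\pmod{p^r}$, which already kills the $2lp^r$-term. For the leading sum $\sum 1/(2j+1)$ I pair $m=2j+1$ with $2p^r-m$ (fixed-point-free because the would-be fixed point $m=p^r$ is divisible by $p$); this converts the sum into $-2p^r$ times a half-range sum of $1/m^2$, which in turn vanishes mod $p^r$ by the same inversion argument. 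The proof of (\ref{fangc35}) is entirely parallel, using $k+1=lp^r+m$ together with the pairing $m\leftrightarrow p^r-m$, and invoking (\ref{fangc25}) to dispose of the resulting half-range $\sum 1/m^2$.

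For (\ref{fangc44}) I swap the order of summation. Writing $H(N)=\sum_{j=1,\,p\nmid j}^{N}1/j^{2}$, the number of valid $k$ in the window for a fixed $j$ with $p\nmid j$ is $p^r$ when $j\le lp^r$ and $(l+1)p^r-j$ when $lp^r<j<(l+1)p^r$, which yields
\begin{align*}
\sum_{\lfloor k/p^r\rfloor=l}\sum_{\substack{j=1\\p\nmid j}}^{k}\frac{1}{j^{2}}=p^{r}H(lp^r)+\sum_{\substack{m=1\\p\nmid m}}^{p^r-1}\frac{p^r-m}{(lp^r+m)^{2}}.
\end{align*}
Decomposing $H(lp^r)$ into $l$ blocks of length $p^r$ shows $H(lp^r)\equiv 0\pmod{p^r}$, so the first term is $\equiv 0\pmod{p^{2r}}$. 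In the second term, the same geometric expansion $(lp^r+m)^{-2}\equiv m^{-2}-2lp^r m^{-3}\pmod{p^{2r}}$ splits everything into pieces that reduce to sums already handled in the treatment of (\ref{fangc20})--(\ref{fangc35}), scaled by appropriate powers of $p^r$.

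The main obstacle is the bookkeeping in (\ref{fangc44}): swapping the order of summation produces the partial-sum term $H(lp^r)$, whose vanishing modulo $p^r$ must be established separately by a block decomposition before the final $p^{2r}$-precision can be reached. Beyond that, care is required to ensure that every shifted denominator $lp^r+m$ is expanded to full $p^{2r}$-precision rather than the weaker $p^r$-precision, since losing a single factor of $p^r$ anywhere along the way would degrade the modulus of the final answer.
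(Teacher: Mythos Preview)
Your proposal is correct and follows essentially the same route as the paper: shift into a window of length $p^r$, expand each reciprocal to precision $p^{2r}$, and kill the resulting power sums via pairing together with Lemma~\ref{l3} (the paper quotes Beukers' result (\ref{tfangc10}) for (\ref{fangc35}) rather than redoing the pairing, and for (\ref{fangc44}) it performs the same order-swap you describe). One small point to tighten: in (\ref{fangc20}) your pairing $m\leftrightarrow 2p^r-m$ leaves the half-range sum $\sum_{m\ \mathrm{odd},\,1\le m<p^r,\,p\nmid m}1/m^2$, which is not literally a full $(\Z/p^r\Z)^\times$-sum, so ``the same inversion argument'' needs one extra step---either the parity swap $m\mapsto p^r-m$ to reduce to the full-range sum, or else use the paper's pairing $2k+1=p^r\pm 2j$, which lands directly on (\ref{fangc25}).
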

\begin{proof}
Letting $n=-2$ into (\ref{fangc24}) and $n=2$ into (\ref{fangc25}) repectively, we obtain
\begin{align}\label{tfangc9}\sum_{k=1,p\nmid k}^{p^r-1}\frac{1}{k^2}\equiv2\sum_{k=1,p\nmid k}^{\frac{p^r-1}{2}}\frac{1}{k^2}\equiv 0\pmod{p^r}
\end{align}
for any prime $p\geq 5$.
Noting that
\begin{align}\notag\sum_{k=0,p\nmid(2k+1)}^{p^r-1}\frac{1}{2k+1}&=\sum_{j=1,p\nmid j}^{\frac{p^r-1}{2}}\bigg(\frac{1}{p^r+2j}+\frac{1}{p^r-2j}\bigg)\\\notag&=2p^r\sum_{j=1,p\nmid j}^{\frac{p^r-1}{2}}\frac{1}{p^{2r
}-4j^2}\equiv-\frac{p^r}{2}\sum_{j=1,p\nmid j}^{\frac{p^r-1}{2}}\frac{1}{j^2}\pmod{p^{2r
}}
\end{align}
and 
\begin{align}\notag&\sum_{k=0,p\nmid(2k+1)}^{p^r-1}\frac{1}{(2k+1)^2}\\\notag&=\sum_{j=1,p\nmid j}^{\frac{p^r-1}{2}}\bigg(\frac{1}{(p^r+2j)^2}+\frac{1}{(p^r-2j)^2}\bigg)\equiv \frac{1}{2}\sum_{j=1,p\nmid j}^{\frac{p^r-1}{2}}\frac{1}{j^2}\pmod{p^{r
}},
\end{align} and using (\ref{tfangc9}), we arrive at
\begin{align}\notag\sum_{\lfloor \frac{k}{p^r}\rfloor=l,p\nmid(2k+1)}\frac{1}{2k+1}&=\sum_{k=0,p\nmid(2k+1)}^{p^r-1}\frac{1}{2p^rl+2k+1}\\\notag&\equiv\sum_{k=0,p\nmid(2k+1)}^{p^r-1}\frac{1}{2k+1}\bigg(1-\frac{2p^rl}{2k+1}\bigg)\equiv 0\pmod{p^{2r
}}.
\end{align}
This proves (\ref{fangc20}).

Recall that \cite[Lemma 1]{FB}:
\begin{align}\label{tfangc10}
\sum_{\lfloor \frac{k}{p^r}\rfloor=l, p\nmid{k}}\frac{1}{k}\equiv 0\pmod{p^{2r}}.
\end{align}
Hence
\begin{align}\notag
\sum_{\lfloor \frac{k}{p^r}\rfloor=l, p\nmid{(k+1)}}\frac{1}{k+1}&=\sum_{k=1, p\nmid{k}}^{p^r-1}\frac{1}{p^rl+k}=\sum_{\lfloor \frac{k}{p^r}\rfloor=l, p\nmid{k}}\frac{1}{k}\equiv 0\pmod{p^{2r}}
\end{align}
as desired. 

Observe that 
\begin{align}\notag
\sum_{k=1}^{p^r-1}\sum_{j=1,p\nmid j}^{k}\frac{1}{(p^rl+j)^2}&\equiv\sum_{k=1}^{p^r-1}\sum_{j=1,p\nmid j}^{k}\frac{1}{j^2+2p^rl}\equiv\sum_{k=1}^{p^r-1}\sum_{j=1,p\nmid j}^{k}\bigg(\frac{1}{j^2}-\frac{2p^rl}{j^4}\bigg)\\\notag&=\sum_{j=1,p\nmid j}^{p^r-1}\bigg(\frac{1}{j^2}-\frac{2p^rl}{j^4}\bigg)\sum_{k=j}^{p^r-1}1=\sum_{j=1,p\nmid j}^{p^r-1}\bigg(\frac{1}{j^2}-\frac{2p^rl}{j^4}\bigg)(p^r-j)\\\label{tfangc8}&\equiv p^r\sum_{j=1,p\nmid j}^{p^r-1}\frac{1}{j^2}-\sum_{j=1,p\nmid j}^{p^r-1}\frac{1}{j}+\sum_{j=1,p\nmid j}^{p^r-1}\frac{2p^rl}{j^3}\pmod{p^{2r}}
\end{align}
and 
\begin{align}\label{tfangc7}
\sum_{k=0}^{p^r-1}\sum_{j=1,p\nmid j}^{p^rl-1}\frac{1}{j^2}=p^r\sum_{k=0}^{l-1}\sum_{j=1,p\nmid j}^{p^r-1}\frac{1}{(p^rk+j)^2}\equiv p^rl\sum_{j=1,p\nmid j}^{p^r-1}\frac{1}{j^2}\pmod{p^{2r}}.
\end{align}
Combining (\ref{fangc24}) in the case $n=-3$ and (\ref{tfangc9})-(\ref{tfangc7}), we obtain
\begin{align}\notag
\sum_{\lfloor\frac{k}{p^r}\rfloor=l}\sum_{j=1,p\nmid j}^{k}\frac{1}{j^2}&=\sum_{k=0}^{p^r-1}\sum_{j=1,p\nmid j}^{p^rl+k}\frac{1}{j^2}\\\notag&=\sum_{k=0}^{p^r-1}\sum_{j=1,p\nmid j}^{p^rl-1}\frac{1}{j^2}+\sum_{k=1}^{p^r-1}\sum_{j=1,p\nmid j}^{k}\frac{1}{(p^rl+j)^2}\equiv 0\pmod{p^{2r}}.
\end{align}
So (\ref{fangc44}) is valid.

Now the proof of Lemma \ref{l4} is complete.
\end{proof}

\section{Proofs of Theorems \ref{th1} and \ref{th2}}
\setcounter{equation}{0}
\setcounter{Thm}{0}
\setcounter{Lem}{0}
\setcounter{Cor}{0}
\setcounter{Conj}{0}

\begin{proof}[Proof of Theorem \ref{th1}]
In order to prove (\ref{fangc2}), we need the following combinatorial identity due to Z.-W. Sun
\cite[(1.5)]{ZWSF2}. For any positive integer $n$ we have
\begin{align}\notag
\frac{1}{n}\sum_{k=0}^{n-1}(2k+1)A_k(x)&=\sum_{k=0}^{n-1}\binom{n-1}k\binom{n+k}k\binom{n+k}{2k+1}\binom{2k}kx^k\\\label{fangc16}&=\sum_{k=0}^{n-1}\frac{n}{2k+1}{\binom{n-1}k}^2{\binom{n+k}k}^2x^k.
\end{align}
Let $x=1$ and $n=p^{r-1+j}m$ with $j\in\{0, 1\}$ in (\ref{fangc16}), where $r, m\in \Z^{+}$ and $p\nmid m$. Thus,
\begin{align}\notag
&\frac{1}{p^{r-1}m}\bigg(\sum_{k=0}^{p^rm-1}(2k+1)A_k-p\sum_{k=0}^{p^{r-1}m-1}(2k+1)A_k\bigg)\\\notag&=mp^{r+1}\sum_{k=0}^{p^rm-1}{\binom{p^rm-1}k}^2{\binom{p^rm+k}k}^2\frac{1}{2k+1}\\\label{tfangc1}&\ \ \ -mp^{r}\sum_{k=0}^{p^{r-1}m-1}{\binom{p^{r-1}m-1}k}^2{\binom{p^{r-1}m+k}k}^2\frac{1}{2k+1}.
\end{align}
Since for $0\leq s \leq r-1$ and $1\leq k \leq p^{r-s}m-1$, we get\begin{align}\notag&
\binom{p^{r-s}m-1}k\binom{p^{r-s}m+k}k\\\notag&=\prod_{j=1,p\mid j}^{k}\frac{p^{r-s}m-j}{j}\frac{p^{r-s}m+j}{j}\prod_{j=1,p\nmid j}^{k}\frac{p^{r-s}m-j}{j}\frac{p^{r-s}m+j}{j}\\\label{fangc23}&\equiv\binom{p^{r-s-1}m-1}{\lfloor \frac{k}{p}\rfloor}\binom{p^{r-s-1}m+\lfloor \frac{k}{p}\rfloor}{\lfloor \frac{k}{p}\rfloor}(-1)^{k-\lfloor \frac{k}{p}\rfloor}\bigg(1-\sum_{j=1,p\nmid j}^{k}\frac{p^{2r-2s}m^2}{j^2}\bigg)\pmod{p^{4r-4s
}}.
\end{align}
Applying (\ref{fangc23}) with $s=0$, we arrive at 
\begin{align}\notag&mp^{r+1}\sum_{k=0,p\nmid(2k+1)}^{p^rm-1}{\binom{p^rm-1}k}^2{\binom{p^rm+k}k}^2\frac{1}{2k+1}\\\notag&\equiv mp^{r+1}\sum_{k=0,p\nmid(2k+1)}^{p^rm-1}{\binom{p^{r-1}m-1}{\lfloor \frac{k}{p}\rfloor}}^2{\binom{p^{r-1}m+\lfloor \frac{k}{p}\rfloor}{\lfloor \frac{k}{p}\rfloor}}^2\frac{1}{2k+1}\\\notag&=mp^{r+1}\sum_{l=0}^{p^{r-1}m-1}{\binom{p^{r-1}m-1}{l}}^2{\binom{p^{r-1}m+l}{l}}^2\sum_{\lfloor \frac{k}{p}\rfloor=l,p\nmid(2k+1)}\frac{1}{2k+1}\pmod{p^{3r+1
}}.
\end{align}
By (\ref{fangc20}) (with $r=1$) and (\ref{fangc23}) (with $s=1$),
\begin{align}\notag&mp^{r+1}\sum_{k=0,p\nmid(2k+1)}^{p^rm-1}{\binom{p^rm-1}k}^2{\binom{p^rm+k}k}^2\frac{1}{2k+1}\\\notag&\equiv mp^{r+1}\sum_{l=0}^{p^{r-2}m-1}{\binom{p^{r-2}m-1}{l}}^2{\binom{p^{r-2}m+l}{l}}^2\sum_{\lfloor \frac{k}{p^2}\rfloor=l,p\nmid(2k+1)}\frac{1}{2k+1}\pmod{p^{3r+1
}}.
\end{align}
Combining (\ref{fangc20}) and (\ref{fangc23}), by induction, for $3\leq s \leq r-1$ we have 
\begin{align}\notag&mp^{r+1}\sum_{k=0,p\nmid(2k+1)}^{p^rm-1}{\binom{p^rm-1}k}^2{\binom{p^rm+k}k}^2\frac{1}{2k+1}\\\notag&\equiv mp^{r+1}\sum_{l=0}^{p^{r-s}m-1}{\binom{p^{r-s}m-1}{l}}^2{\binom{p^{r-s}m+l}{l}}^2\sum_{\lfloor \frac{k}{p^s}\rfloor=l,p\nmid(2k+1)}\frac{1}{2k+1}\\\label{tfangc2}&\equiv mp^{r+1}\sum_{l=0}^{m-1}{\binom{m-1}{l}}^2{\binom{m+l}{l}}^2\sum_{\lfloor \frac{k}{p^r}\rfloor=l,p\nmid(2k+1)}\frac{1}{2k+1}\equiv0\pmod{p^{3r+1
}}.
\end{align}
On the other hand,
\begin{align}\notag
&mp^{r+1}\sum_{k=0,p\mid(2k+1)}^{p^rm-1}{\binom{p^rm-1}k}^2{\binom{p^rm+k}k}^2\frac{1}{2k+1}\\\label{tfangc3}&=mp^{r}\sum_{l=0}^{p^{r-1}m-1}{\binom{p^rm-1}{lp+\frac{p-1}{2}}}^2{\binom{p^rm+lp+\frac{p-1}{2}}{lp+\frac{p-1}{2}}}^2\frac{1}{2l+1}.
\end{align}
For $l\in \{0, \cdots, p^{r-1}m-1\}$, we split the above binomial sum into two cases $p\mid 2l+1$ and $p\nmid 2l+1$. Letting $s=0$ and $k=lp+\frac{p-1}{2}$ in (\ref{fangc23}) and noting (\ref{tfangc3}), we obtain
\begin{align}\notag
&mp^{r}\sum_{l=0, p\nmid (2l+1)}^{p^{r-1}m-1}{\binom{p^rm-1}{lp+\frac{p-1}{2}}}^2{\binom{p^rm+lp+\frac{p-1}{2}}{lp+\frac{p-1}{2}}}^2\frac{1}{2l+1}\\\notag&\equiv mp^{r}\sum_{l=0, p\nmid (2l+1)}^{p^{r-1}m-1}{\binom{p^{r-1}m-1}{l}}^2{\binom{p^{r-1}m+l}{l}}^2\\\notag&\ \ \ \ \times\bigg(1-\sum_{j=1,p\nmid j}^{{lp+\frac{p-1}{2}}}\frac{2p^{2r}m^2}{j^2}\bigg)\frac{1}{2l+1}\pmod{p^{3r+1
}}.
\end{align}
Now, for any positive integer $s$, by (\ref{tfangc9}) and (\ref{tfangc7}) we have
\begin{align}\label{fangc26}\sum_{j=1,p\nmid j}^{{lp^s+\frac{p^s-1}{2}}}\frac{1}{j^2}&=\sum_{j=1,p\nmid j}^{{lp^s-1}}\frac{1}{j^2}+\sum_{j=1,p\nmid j}^{\frac{p^s-1}{2}}\frac{1}{(lp^s+j)^2}\equiv l\sum_{j=1,p\nmid j}^{{p^s-1}}\frac{1}{j^2}+\sum_{j=1,p\nmid j}^{\frac{p^s-1}{2}}\frac{1}{j^2}\equiv0\pmod{p^{s}}.
\end{align}
It follows that
\begin{align}\notag
&mp^{r}\sum_{l=0, p\nmid (2l+1)}^{p^{r-1}m-1}{\binom{p^rm-1}{lp+\frac{p-1}{2}}}^2{\binom{p^rm+lp+\frac{p-1}{2}}{lp+\frac{p-1}{2}}}^2\frac{1}{2l+1}\\\label{tfangc4}&\equiv mp^{r}\sum_{l=0, p\nmid (2l+1)}^{p^{r-1}m-1}{\binom{p^{r-1}m-1}{l}}^2{\binom{p^{r-1}m+l}{l}}^2\frac{1}{2l+1}\pmod{p^{3r+1
}}.
\end{align}
From (\ref{tfangc1}), (\ref{tfangc2}), (\ref{tfangc3}) and (\ref{tfangc4}), we deduce that
\begin{align}\notag
&\frac{1}{mp^{r-1}}\bigg(\sum_{k=0}^{p^rm-1}(2k+1)A_k-p\sum_{k=0}^{p^{r-1}m-1}(2k+1)A_k\bigg)\\\notag&\equiv\sum_{k=0, p\mid (2k+1)}^{p^{r-1}m-1}\frac{ mp^{r}}{2k+1}\bigg({\binom{p^rm-1}{kp+\frac{p-1}{2}}}^2{\binom{p^rm+kp+\frac{p-1}{2}}{kp+\frac{p-1}{2}}}^2\\\label{tfangc27}&\ \ \ -{\binom{p^{r-1}m-1}{k}}^2{\binom{p^{r-1}m+k}{k}}^2\bigg)\pmod{p^{3r+1
}}.
\end{align}
Letting $k=lp+\frac{p-1}{2}$ in the sums of the right-hand side of (\ref{tfangc27}), we obtain
\begin{align}\notag
&\frac{1}{mp^{r-1}}\bigg(\sum_{k=0}^{p^rm-1}(2k+1)A_k-p\sum_{k=0}^{p^{r-1}m-1}(2k+1)A_k\bigg)\\\notag&=\sum_{l=0}^{p^{r-2}m-1}\frac{ mp^{r-1}}{2l+1}\bigg({\binom{p^rm-1}{lp^2+\frac{p^2-1}{2}}}^2{\binom{p^rm+lp^2+\frac{p^2-1}{2}}{lp^2+\frac{p^2-1}{2}}}^2\\\label{fangc27}&\ \ \ -{\binom{p^{r-1}m-1}{lp+\frac{p-1}{2}}}^2{\binom{p^{r-1}m+lp+\frac{p-1}{2}}{lp+\frac{p-1}{2}}}^2\bigg)\pmod{p^{3r+1
}}.
\end{align}
For any positive integer $s\in \{2, \cdots, r-1\}$, we get
\begin{align}\notag
&\sum_{l=0, p\mid (2l+1)}^{p^{r-s}m-1}\frac{ mp^{r-s+1}}{2l+1}\bigg({\binom{p^rm-1}{lp^s+\frac{p^s-1}{2}}}^2{\binom{p^rm+lp^s+\frac{p^s-1}{2}}{lp^s+\frac{p^s-1}{2}}}^2\\\notag&\ \ \ -{\binom{p^{r-1}m-1}{lp^{s-1}+\frac{p^{s-1}-1}{2}}}^2{\binom{p^{r-1}m+lp^{s-1}+\frac{p^{s-1}-1}{2}}{lp^{s-1}+\frac{p^{s-1}-1}{2}}}^2\bigg)\\\notag&=\sum_{k=0}^{p^{r-s-1}m-1}\frac{ mp^{r-s}}{2k+1}\bigg({\binom{p^rm-1}{kp^{s+1}+\frac{p^{s+1}-1}{2}}}^2{\binom{p^rm+kp^{s+1}+\frac{p^{s+1}-1}{2}}{kp^{s+1}+\frac{p^{s+1}-1}{2}}}^2\\\label{fangc28}&\ \ \ -{\binom{p^{r-1}m-1}{kp^{s}+\frac{p^{s}-1}{2}}}^2{\binom{p^{r-1}m+kp^{s}+\frac{p^{s}-1}{2}}{kp^{s}+\frac{p^{s}-1}{2}}}^2\bigg).
\end{align}
By (\ref{fangc23}) and (\ref{fangc26}), for any positive integer $s\in \{2, \cdots, r\}$ we have
\begin{align}\notag
&\sum_{l=0, p\nmid (2l+1)}^{p^{r-s}m-1}\frac{ mp^{r-s+1}}{2l+1}\bigg({\binom{p^rm-1}{lp^s+\frac{p^s-1}{2}}}^2{\binom{p^rm+lp^s+\frac{p^s-1}{2}}{lp^s+\frac{p^s-1}{2}}}^2\\\notag&\ \ \ -{\binom{p^{r-1}m-1}{lp^{s-1}+\frac{p^{s-1}-1}{2}}}^2{\binom{p^{r-1}m+lp^{s-1}+\frac{p^{s-1}-1}{2}}{lp^{s-1}+\frac{p^{s-1}-1}{2}}}^2\bigg)\\\notag&\equiv-2p^{3r-s+1}m^3\sum_{l=0, p\nmid (2l+1)}^{p^{r-s}m-1}\frac{1}{2l+1}{\binom{p^{r-1}m-1}{lp^{s-1}+\frac{p^{s-1}-1}{2}}}^2{\binom{p^{r-1}m+lp^{s-1}+\frac{p^{s-1}-1}{2}}{lp^{s-1}+\frac{p^{s-1}-1}{2}}}^2\\\label{fangc29}&\ \ \ \times\sum_{j=1,p\nmid j}^{{lp^s+\frac{p^s-1}{2}}}\frac{1}{j^2}\equiv 0\pmod{p^{3r+1
}}.
\end{align}
Combining (\ref{fangc28}), (\ref{fangc29}) with (\ref{fangc27}), by induction, we deduce that
\begin{align}\notag
&\frac{1}{mp^{r-1}}\bigg(\sum_{k=0}^{p^rm-1}(2k+1)A_k-p\sum_{k=0}^{p^{r-1}m-1}(2k+1)A_k\bigg)\\\notag&\equiv\sum_{k=0}^{m-1}\frac{ mp}{2k+1}\bigg({\binom{p^rm-1}{kp^{r}+\frac{p^{r}-1}{2}}}^2{\binom{p^rm+kp^{r}+\frac{p^{r}-1}{2}}{kp^{r}+\frac{p^{r}-1}{2}}}^2\\\label{fangc30}&\ \ \ -{\binom{p^{r-1}m-1}{kp^{r-1}+\frac{p^{r-1}-1}{2}}}^2{\binom{p^{r-1}m+kp^{r-1}+\frac{p^{r-1}-1}{2}}{kp^{r-1}+\frac{p^{r-1}-1}{2}}}^2\bigg)\pmod{p^{3r+1
}}.
\end{align}
If $m-1<\frac{p-1}{2}$, namely, for $k\in\{0, \cdots, m-1\}$ we have $p\nmid (2k+1)$. The congruence (\ref{fangc30}) follows from (\ref{fangc29}) with $s=r$. While $m-1\geq \frac{p-1}{2}$, there are some $k$ with $p\nmid (2k+1)$ or  $p\mid (2k+1)$. The proof of the case $p\nmid (2k+1)$ is very similar to the proof of (\ref{fangc29}), only requiring a few additional discussions with the case $p\mid (2k+1)$. Therefore,
\begin{align}\notag
&\frac{1}{mp^{r-1}}\bigg(\sum_{k=0}^{p^rm-1}(2k+1)A_k-p\sum_{k=0}^{p^{r-1}m-1}(2k+1)A_k\bigg)\\\notag&\equiv\sum_{l=0}^{\lfloor \frac{m-1-(p-1)/2}{p}\rfloor}\frac{ m}{2l+1}\bigg({\binom{p^rm-1}{lp^{r+1}+\frac{p^{r+1}-1}{2}}}^2{\binom{p^rm+lp^{r+1}+\frac{p^{r+1}-1}{2}}{lp^{r+1}+\frac{p^{r+1}-1}{2}}}^2\\\label{fangc31}&\ \ \ -{\binom{p^{r-1}m-1}{lp^{r}+\frac{p^{r}-1}{2}}}^2{\binom{p^{r-1}m+lp^{r}+\frac{p^{r}-1}{2}}{lp^{r}+\frac{p^{r}-1}{2}}}^2\bigg)\pmod{p^{3r+1
}}.
\end{align}
By induction, in the end, there must be such a case for all integers $l\in \{0,\cdots, m_1\}$ with $p\nmid (2l+1)$.
In this case, we can rewrite the sum on the right-hand side of (\ref{fangc31}) as 

\begin{align}\notag&\sum_{l=0}^{m_1}\frac{ m}{(2l+1)p^t}\bigg({\binom{p^rm-1}{lp^{r+t+1}+\frac{p^{r+t+1}-1}{2}}}^2{\binom{p^rm+lp^{r+t+1}+\frac{p^{r+t+1}-1}{2}}{lp^{r+t+1}+\frac{p^{r+t+1}-1}{2}}}^2\\\label{tfangc5}& \ \ -{\binom{p^{r-1}m-1}{lp^{r+t}+\frac{p^{r+t}-1}{2}}}^2{\binom{p^{r-1}m+lp^{r}+\frac{p^{r+t}-1}{2}}{lp^{r+t}+\frac{p^{r+t}-1}{2}}}^2\bigg)\pmod{p^{3r+1
}}.\end{align}
Substituting (\ref{fangc23}) (with $k=lp^{r+t+1}+\frac{p^{r+t+1}-1}{2}$ and $s=0$) and (\ref{fangc26}) (with $s=r+t+1$) into (\ref{tfangc5}), we immediately get (\ref{fangc2}).

Next we will prove (\ref{fangc6}). A nice identity of Guo and Zeng \cite[(4.8)]{VJWGJZF2} ) states that
\begin{align}\notag\sum_{k=0}^{n-1}(2k+1)^3 A_k&=n^2\sum_{k=0}^{n-1}\binom{n+k}{k}{\binom{n-1}{k}}^2\bigg(2n\binom{n+k}{k+1}-\binom{n+k}{k}\bigg)\\\label{fangc32}&=n^2\sum_{k=0}^{n-1}{\binom{n+k}{k}}^2{\binom{n-1}{k}}^2\bigg(\frac{2n^2}{k+1}-1\bigg).
\end{align}
Substituting $n=p^{r-1+j}m$ with $j\in\{0, 1\}$ in (\ref{fangc32}), where $r, m\in \Z^{+}$ and $p\nmid m$, we have
\begin{align}\notag
&\frac{1}{m^3p^{3r}}\bigg(\sum_{k=0}^{p^rm-1}(2k+1)^3A_k-p^3\sum_{k=0}^{p^{r-1}m-1}(2k+1)^3A_k\bigg)\\\notag&=\sum_{k=0}^{p^rm-1}{\binom{p^rm+k}{k}}^2{\binom{p^rm-1}{k}}^2\bigg(\frac{2p^rm}{k+1}-\frac{1}{p^rm}\bigg)\\\label{fangc33}&\ \ \ \ -\sum_{k=0}^{p^{r-1}m-1}{\binom{p^{r-1}m+k}{k}}^2{\binom{p^{r-1}m-1}{k}}^2\bigg(\frac{2p^{r-1}m}{k+1}-\frac{1}{p^{r-1}m}\bigg).
\end{align}
By (\ref{fangc23}) (with $s=0$), we obtain that
\begin{align}\notag
&\sum_{k=0, p\nmid{(k+1)}}^{p^rm-1}{\binom{p^rm+k}{k}}^2{\binom{p^rm-1}{k}}^2\frac{1}{k+1}\\\label{fangc34}&\equiv\sum_{l=0}^{p^{r-1}m-1}{\binom{p^{r-1}m+l}{l}}^2{\binom{p^{r-1}m-1}{l}}^2\sum_{\lfloor \frac{k}{p}\rfloor=l, p\nmid{(k+1)}}\frac{1}{k+1}\pmod{p^{2r}}.
\end{align}
For any integer $1\leq s \leq r-1$, by (\ref{fangc35}) and (\ref{fangc23}), we get
\begin{align}\notag&\sum_{l=0}^{p^{r-s}m-1}{\binom{p^{r-s}m+l}{l}}^2{\binom{p^{r-s}m-1}{l}}^2\sum_{\lfloor \frac{k}{p^s}\rfloor=l, p\nmid{(k+1)}}\frac{1}{k+1}\\\label{tfangc11}&\equiv\sum_{l=0}^{p^{r-s-1}m-1}{\binom{p^{r-s-1}m+l}{l}}^2{\binom{p^{r-s-1}m-1}{l}}^2\sum_{\lfloor \frac{k}{p^{s+1}}\rfloor=l, p\nmid{(k+1)}}\frac{1}{k+1}\pmod{p^{2r}}.
\end{align}
Combining (\ref{fangc34}) with (\ref{tfangc11}), by induction, we see that
\begin{align}\notag
&\sum_{k=0, p\nmid{(k+1)}}^{p^rm-1}{\binom{p^rm+k}{k}}^2{\binom{p^rm-1}{k}}^2\frac{1}{k+1}\\\notag&\equiv\sum_{l=0}^{m-1}{\binom{m+l}{l}}^2{\binom{m-1}{l}}^2\sum_{\lfloor \frac{k}{p^r}\rfloor=l, p\nmid{(k+1)}}\frac{1}{k+1}\equiv 0\pmod{p^{2r}}.
\end{align}
Note that 
\begin{align}\notag
&\sum_{k=0, p\mid{(k+1)}}^{p^rm-1}{\binom{p^rm+k}{k}}^2{\binom{p^rm-1}{k}}^2\frac{2p^rm}{k+1}\\\notag&\ \ \ \ \ \ \ \ \ \ \ \ \ \ \ \ \ \ \ \ \ -\sum_{k=0}^{p^{r-1}m-1}{\binom{p^{r-1}m+k}{k}}^2{\binom{p^{r-1}m-1}{k}}^2\frac{2p^{r-1}m}{k+1}\\\notag&=\sum_{l=0}^{p^{r-1}m-1}\bigg({\binom{p^{r}m+pl+p-1}{pl+p-1}}^2{\binom{p^{r}m-1}{pl+p-1}}^2\\\label{fangc37}&\ \ \ \ \ \ \ \ \ \ \ \ \ \ \ \ \ \ \ \ \  -{\binom{p^{r-1}m+l}{l}}^2{\binom{p^{r-1}m-1}{l}}^2\bigg)\frac{2p^{r-1}m}{l+1}.
\end{align}
Since for any nonnegative integer $l$, by (\ref{tfangc7}) we obtain
\begin{align}\label{fangc38}\sum_{j=1,p\nmid j}^{p^sl+p^s-1}\frac{1}{j^2}\equiv (l+1)\sum_{j=1, p\nmid j}^{p^s-1}\frac{1}{j^2}\equiv 0\pmod{p^{s}}.
\end{align}
For any positive integer $s\leq r$, setting $k=p^sl+p^s-1$ in (\ref{fangc23}) and using (\ref{fangc38}) yields
\begin{align}\notag 
&\sum_{k=0, p\nmid(k+1) }^{p^{r-s}m-1}\bigg({\binom{p^{r}m+p^sk+p^s-1}{p^sk+p^s-1}}^2{\binom{p^{r}m-1}{p^sk+p^s-1}}^2\\\notag&\ \ \ \ \ \ \ \ \ \ \ \ \ \ \ \ \ \  -{\binom{p^{r-1}m+p^{s-1}k+p^{s-1}-1}{p^{s-1}k+p^{s-1}-1}}^2{\binom{p^{r-1}m-1}{p^{s-1}k+p^{s-1}-1}}^2\bigg)\frac{2p^{r-s}m}{k+1}\\\notag&\equiv-4p^{3r-s}m^3\sum_{k=0, p\nmid(k+1) }^{p^{r-s}m-1}{\binom{p^{r-1}m+p^{s-1}k+p^{s-1}-1}{p^{s-1}k+p^{s-1}-1}}^2{\binom{p^{r-1}m-1}{p^{s-1}k+p^{s-1}-1}}^2\\\label{fangc39}&\ \ \ \ \ \ \ \ \ \ \  \ \ \ \ \ \ \times\frac{1}{k+1}\sum_{j=1, p\nmid j}^{p^sk+p^s-1}\frac{1}{j^2}\equiv0\pmod{p^{3r
}}.
\end{align}
Note that for any positive integer $s\leq r-1,$
\begin{align}\notag 
&\sum_{k=0, p\mid(k+1) }^{p^{r-s}m-1}\bigg({\binom{p^{r}m+p^sk+p^s-1}{p^sk+p^s-1}}^2{\binom{p^{r}m-1}{p^sk+p^s-1}}^2\\\notag&\ \ \ \ \ \ \ \ \ \ \ \ \ \ \ \ \ \  -{\binom{p^{r-1}m+p^{s-1}k+p^{s-1}-1}{p^{s-1}k+p^{s-1}-1}}^2{\binom{p^{r-1}m-1}{p^{s-1}k+p^{s-1}-1}}^2\bigg)\frac{2p^{r-s}m}{k+1}\\\notag&=
\sum_{l=0}^{p^{r-s-1}m-1}\bigg({\binom{p^{r}m+p^{s+1}l+p^{s+1}-1}{p^{s+1}l+p^{s+1}-1}}^2{\binom{p^{r}m-1}{p^{s+1}l+p^{s+1}-1}}^2\\\label{tfangc12}&\ \ \ \ \ \ \ \ \ \ \ \ \ \ \ \ \ \  -{\binom{p^{r-1}m+p^{s}l+p^{s}-1}{p^{s}l+p^{s}-1}}^2{\binom{p^{r-1}m-1}{p^{s}l+p^{s}-1}}^2\bigg)\frac{2p^{r-s-1}m}{l+1}.
\end{align}
Combining (\ref{fangc37}), (\ref{fangc39}) and (\ref{tfangc12}), by induction, we get
\begin{align}\notag
&\sum_{k=0, p\mid{(k+1)}}^{p^rm-1}{\binom{p^rm+k}{k}}^2{\binom{p^rm-1}{k}}^2\frac{2p^rm}{k+1}\\\notag&\ \ \ \ \ \ \ \ \ \ \ \ \ \ \ \ \ \ -\sum_{k=0}^{p^{r-1}m-1}{\binom{p^{r-1}m+k}{k}}^2{\binom{p^{r-1}m-1}{k}}^2\frac{2p^{r-1}m}{k+1}\\\notag&\equiv\sum_{k=0, p\mid(k+1) }^{m-1}\bigg({\binom{p^{r}m+p^rk+p^r-1}{p^rk+p^r-1}}^2{\binom{p^{r}m-1}{p^rk+p^r-1}}^2\\\notag&\ \ \ \ \ \ \ \ \ \ \ \ \ \ \ \ \ \ -{\binom{p^{r-1}m+p^{r-1}k+p^{r-1}-1}{p^{r-1}k+p^{r-1}-1}}^2{\binom{p^{r-1}m-1}{p^{r-1}k+p^{r-1}-1}}^2\bigg)\frac{2m}{k+1}\pmod{p^{3r}}.
\end{align}
If $m\leq p-1$, by (\ref{fangc39}) with $s=r$, we have
 \begin{align}\notag
&\sum_{k=0, p\mid{(k+1)}}^{p^rm-1}{\binom{p^rm+k}{k}}^2{\binom{p^rm-1}{k}}^2\frac{2p^rm}{k+1}\\\notag&\ \ \ \ \ \ \ \ \ \ \ \ \ \ \ \ \ \ -\sum_{k=0}^{p^{r-1}m-1}{\binom{p^{r-1}m+k}{k}}^2{\binom{p^{r-1}m-1}{k}}^2\frac{2p^{r-1}m}{k+1}\equiv 0\pmod{p^{3r}}.
\end{align}
While $m\geq p$, without loss of generality, suppose that $m=\sum_{k=0}^{t}m_kp^{k}$ with $ m_0,  m_t\in\{1, \cdots, p-1\}$ and $m_k\in\{0, \cdots, p-1\}$ for $k\in\{1, \cdots, t-1\}$, by induction, we get 

\begin{align}\notag
&\sum_{k=0, p\mid(k+1) }^{m-1}\bigg({\binom{p^{r}m+p^rk+p^r-1}{p^rk+p^r-1}}^2{\binom{p^{r}m-1}{p^rk+p^r-1}}^2\\\notag&\ \ \ -{\binom{p^{r-1}m+p^{r-1}k+p^{r-1}-1}{p^{r-1}k+p^{r-1}-1}}^2{\binom{p^{r-1}m-1}{p^{r-1}k+p^{r-1}-1}}^2\bigg)\frac{2m}{k+1}\\\notag&\equiv\sum_{l=0 }^{\lfloor \frac{m-p}{p}\rfloor}\bigg({\binom{p^{r}m+p^{r+1}l+p^{r+1}-1}{p^{r+1}l+p^{r+1}-1}}^2{\binom{p^{r}m-1}{p^{r+1}l+p^{r+1}-1}}^2\\\notag&\ \ \ -{\binom{p^{r-1}m+p^{r}l+p^{r}-1}{p^{r}l+p^{r}-1}}^2{\binom{p^{r-1}m-1}{p^{r}l+p^{r}-1}}^2\bigg)\frac{2m}{p(l+1)}\\\notag&\equiv\sum_{l=0 }^{m_t-1}\bigg({\binom{p^{r}m+p^{r+t}l+p^{r+t}-1}{p^{r+t}l+p^{r+t}-1}}^2{\binom{p^{r}m-1}{p^{r+t}l+p^{r+t}-1}}^2-{\binom{p^{r-1}m-1}{p^{r+t-1}l+p^{r+t-1}-1}}^2\\\notag&\ \ \times{\binom{p^{r-1}m+p^{r+t-1}l+p^{r+t-1}-1}{p^{r+t-1}l+p^{r+t-1}-1}}^2\bigg)\frac{2m}{p^{{t}}(l+1)}\equiv 0\pmod{p^{3r}},
\end{align}
where the last conguence comes from
(\ref{fangc39}) with $s=r+t$, since $p\nmid (l+1)$
for $0\leq l\leq m_t-1 \leq p-2$.
Combining the above with (\ref{fangc33}), it suffices to show that
\begin{align}\notag&\sum_{k=0}^{p^{r}m-1}{\binom{p^{r}m+k}{k}}^2{\binom{p^{r}m-1}{k}}^2\\\label{fangc41}&\ \ -p\sum_{k=0}^{p^{r-1}m-1}{\binom{p^{r-1}m+k}{k}}^2{\binom{p^{r-1}m-1}{k}}^2\equiv 0\pmod{p^{4r}}.
\end{align}
Substituting (\ref{fangc23}) with $s=0$ into (\ref{fangc41}) gives
\begin{align}\notag&\sum_{k=0}^{p^{r}m-1}{\binom{p^{r}m+k}{k}}^2{\binom{p^{r}m-1}{k}}^2-p\sum_{k=0}^{p^{r-1}m-1}{\binom{p^{r-1}m+k}{k}}^2{\binom{p^{r-1}m-1}{k}}^2\\\notag&\equiv\sum_{l=0}^{p^{r-1}m-1}{\binom{p^{r-1}m+l}{l}}^2{\binom{p^{r-1}m-1}{l}}^2\bigg(\sum_{\lfloor\frac{k}{p}\rfloor=l}\bigg(1-\sum_{j=1,p\nmid j}^{k}\frac{2m^2p^{2r}}{j^2}\bigg)-p\bigg)\\\label{fangc42}&=-2m^2p^{2r}\sum_{l=0}^{p^{r-1}m-1}{\binom{p^{r-1}m+l}{l}}^2{\binom{p^{r-1}m-1}{l}}^2\sum_{\lfloor\frac{k}{p}\rfloor=l}\sum_{j=1,p\nmid j}^{k}\frac{1}{j^2}\pmod{p^{4r}}.
\end{align}
With the help of (\ref{fangc44}), (\ref{fangc23}) and (\ref{fangc42}), for any positive integer $s\leq r-1$, by induction we have
\begin{align}\notag&\sum_{k=0}^{p^{r}m-1}{\binom{p^{r}m+k}{k}}^2{\binom{p^{r}m-1}{k}}^2-p\sum_{k=0}^{p^{r-1}m-1}{\binom{p^{r-1}m+k}{k}}^2{\binom{p^{r-1}m-1}{k}}^2\\\notag&\equiv-2m^2p^{2r}\sum_{l=0}^{p^{r-s}m-1}{\binom{p^{r-s}m+l}{l}}^2{\binom{p^{r-s}m-1}{l}}^2\sum_{\lfloor\frac{k}{p^s}\rfloor=l}\sum_{j=1,p\nmid j}^{k}\frac{1}{j^2}\\\notag&\equiv-2m^2p^{2r}\sum_{l=0}^{p^{r-s-1}m-1}{\binom{p^{r-s-1}m+l}{l}}^2{\binom{p^{r-s-1}m-1}{l}}^2\sum_{\lfloor\frac{k}{p^{s+1}}\rfloor=l}\sum_{j=1,p\nmid j}^{k}\frac{1}{j^2}\\\notag&\equiv-2m^2p^{2r}\sum_{l=0}^{m-1}{\binom{m+l}{l}}^2{\binom{m-1}{l}}^2\sum_{\lfloor\frac{k}{p^r}\rfloor=l}\sum_{j=1,p\nmid j}^{k}\frac{1}{j^2}\equiv 0\pmod{p^{4r}}.
\end{align}
This proves (\ref{fangc41}).

Combining the above, we have completed the proof of Theorem \ref{th1}.
\end{proof}

\begin{proof}[Proof of Theorem \ref{th2}]
We need the following identity due to V. J. W. Guo \cite[(2.6)]{VJWGJZF1}:
\begin{align}\frac{(-1)^n}{n^2}\label{fangc7}\sum_{k=0}^{n-1}(3k+2)(-1)^kf_k=n\sum_{k=1}^{n}{\binom{n-1}{k-1}}^3\frac{n^2-4k^2}{k^3}+1.
\end{align}
Taking $n=p^{r-1+j}m$ for $j\in\{0, 1\}$ in (\ref{fangc7}), where $r, m\in \Z^{+}$ and $p\nmid m$, and noting that  $$\frac{(-1)^{p^rm}-(-1)^{p^{r-1}m}}{p^{r-3}m}=[r=1]\frac{(-1)^{pm}-(-1)^{m}}{m}p^2=[r=1][p=2]\frac{2p^2}{m},$$ we obtain
\begin{align}\notag&\frac{1}{p^{3r-3}m^3}\bigg(\sum_{k=0}^{p^rm-1}(3k+2)(-1)^kf_k-p^2\sum_{k=0}^{p^{r-1}m-1}(3k+2)(-1)^kf_k\bigg)\\\notag&=(-1)^{p^{r}m}p^{3}\sum_{k=1}^{p^rm}{\binom{p^rm-1}{k-1}}^3\frac{p^{2r}m^2-4k^2}{k^3}+[r=1][p=2]\frac{2p^2}{m}\\\label{fangc11}&\ \ \ \ -(-1)^{p^{r-1}m}p^{2}\sum_{k=1}^{p^{r-1}m}{\binom{p^{r-1}m-1}{k-1}}^3\frac{p^{2r-2}m^2-4k^2}{k^3}.
\end{align}
If $r=1$, then we have
\begin{align}\notag&(-1)^{mp}p^{3}\sum_{k=1}^{mp}{\binom{mp-1}{k-1}}^3\frac{m^2p^{2}-4k^2}{k^3}\\\notag&\equiv(-1)^{mp}p^{2}\sum_{k=1}^{m}{\binom{mp-1}{pk-1}}^3\frac{m^2-4k^2}{k^3}
\\\label{fangc9}&=\frac{(-1)^{mp}p^{2}}{m}\sum_{k=1}^{m}\binom{mp}{pk}\bigg({\binom{mp}{pk}}^2-4{\binom{mp-1}{pk-1}}^2\bigg)\pmod{p^{3
}}.
\end{align}
In view of Lemma \ref{l1} and Lemma \ref{l2},
\begin{align}\notag&
\frac{(-1)^{mp}p^{2}}{m}\sum_{k=1}^{m}\binom{mp}{pk}\bigg({\binom{mp}{pk}}^2-4{\binom{mp-1}{pk-1}}^2\bigg)\\\notag&\equiv\frac{(-1)^{mp}p^{2}}{m}\sum_{k=1}^{m}(-1)^{pk-k}\binom{m}{k}\bigg({\binom{m}{k}}^2-4{\binom{m-1}{k-1}}^2\bigg)\\\label{fangc10}&=(-1)^{mp}p^{2}\sum_{k=1}^{m}(-1)^{pk-k}{\binom{m-1}{k-1}}^3\frac{m^2-4k^2}{k^3}\pmod{p^{3
}}.\end{align}
 Combining (\ref{fangc11})-(\ref{fangc10}) with (\ref{fangc3}), it suffices to show that
\begin{align}\notag&(-1)^{mp}p^{2}\sum_{k=1}^{m}(-1)^{pk-k}{\binom{m-1}{k-1}}^3\frac{m^2-4k^2}{k^3}+[p=2]\frac{2p^2}{m}\\\label{fangc8}&\equiv(-1)^{m}p^{2}\sum_{k=1}^{m}{\binom{m-1}{k-1}}^3\frac{m^2-4k^2}{k^3}\pmod{p^{3
}}.\end{align}
It is easy to check (\ref{fangc8}) in the case $p\geq 3$. Next suppose that $p=2$. Since $2\nmid m$,
namely, $m$ is an odd integer, clearly, $\frac{2p^2}{m}\equiv 0\pmod{8}$. By (\ref{fangc8}), we obtain
\begin{align}\notag&4\sum_{k=1}^{m}{\binom{m-1}{k-1}}^3\frac{m^2-4k^2}{k^3}((-1)^k+1)\\\notag&=8\sum_{k=1,2\mid k}^{m}{\binom{m-1}{k-1}}^3\frac{m^2-4k^2}{k^3}=\frac{8}{m}\sum_{k=1,2\mid k}^{m}\binom{m}{k}\bigg({\binom{m}{k}}^2-4{\binom{m-1}{k-1}}^2\bigg)\\\notag&\equiv 0\pmod{8}.
\end{align}
So (\ref{fangc8}) with $p=2$ is concluded. This proves (\ref{fangc3}) in the case $r=1$.

Below we assume $r\geq 2$. By (\ref{fangc3}) and (\ref{fangc11}), it suffices to prove that
\begin{align}\notag
&p^{}\sum_{k=1}^{p^rm}{\binom{p^rm-1}{k-1}}^3\frac{p^{2r}m^2-4k^2}{k^3}\\\label{fangc12}&\ \ \ \ -\sum_{k=1}^{p^{r-1}m}{\binom{p^{r-1}m-1}{k-1}}^3\frac{p^{2r-2}m^2-4k^2}{k^3}\equiv0\pmod{p^{
}}.
\end{align}
Note that
\begin{align}\notag&p^{}\sum_{k=1}^{p^rm}{\binom{p^rm-1}{k-1}}^3\frac{p^{2r}m^2-4k^2}{k^3}-\sum_{k=1}^{p^{r-1}m}{\binom{p^{r-1}m-1}{k-1}}^3\frac{p^{2r-2}m^2-4k^2}{k^3}\\\label{fangc14}&\equiv\sum_{k=1}^{p^{r-1}m}\bigg({\binom{p^rm-1}{pk-1}}^3-{\binom{p^{r-1}m-1}{k-1}}^3\bigg)\frac{p^{2r-2}m^2-4k^2}{k^3}\pmod{p^{
}}.
\end{align}
For $1\leq k\leq p^{r-1}m$, by Lemma \ref{l1} and (\ref{tfangc10}), we have
\begin{align}\notag{\binom{p^rm-1}{pk-1}}^2&\equiv{\binom{p^{r-1}m-1}{k-1}}^2\bigg(1-p^rm\sum_{j=1,p\nmid k}^{pk-1}\frac{1}{j}\bigg)^2\equiv{\binom{p^{r-1}m-1}{k-1}}^2\pmod{p^{r+2}}.
\end{align}
With the help of Lemma \ref{l2} and the above congruence,
\begin{align}\notag&{\binom{p^rm-1}{pk-1}}^3\frac{p^{2r-2}m^2-4k^2}{k^3}\\\notag&=\frac{1}{p^{r-1}m}{\binom{p^rm}{pk}}^3-\frac{4}{p^{r-1}m}\binom{p^rm}{pk}{\binom{p^rm-1}{pk-1}}^2\\\notag&\equiv\frac{(-1)^{pk-k}}{p^{r-1}m}{\binom{p^{r-1}m}{k}}^3-\frac{4(-1)^{pk-k}}{p^{r-1}m}\binom{p^{r-1}m}{k}{\binom{p^{r-1}m-1}{k-1}}^2\\\label{fangc13}&=(-1)^{pk-k}{\binom{p^{r-1}m-1}{k-1}}^3\frac{p^{2r-2}m^2-4k^2}{k^3}\pmod{ p^{r+2-\delta_{p,3}-2\delta_{p,2}}}.\end{align}
The congruence (\ref{fangc12}) in the case $p\geq 3$ easily follows from (\ref{fangc14}) and (\ref{fangc13}).
While $p=2$ and $r\geq 2$, substituing (\ref{fangc13}) into (\ref{fangc14}) yields
\begin{align}\notag&2^{}\sum_{k=1}^{2^rm}{\binom{2^rm-1}{k-1}}^3\frac{2^{2r}m^2-4k^2}{k^3}-\sum_{k=1}^{2^{r-1}m}{\binom{2^{r-1}m-1}{k-1}}^3\frac{2^{2r-2}m^2-4k^2}{k^3}\\\notag&\equiv\sum_{k=1, 2\nmid k}^{2^{r-1}m}((-1)^k-1){\binom{2^{r-1}m-1}{k-1}}^3\frac{2^{2r-2}m^2-4k^2}{k^3}\equiv 0\pmod{2}.
\end{align}
Combining the above, the congruence (\ref{fangc3}) is concluded.

The proof of Theorem \ref{th2} is now complete.
\end{proof}

\begin{Ack}
The author would like to thank the referee for helpful comments.
\end{Ack}


\begin{thebibliography}{99}
\bibitem{Ap} R. Ap\'ery, {\it Irrationalit\'e de $\zeta(2)$ et $\zeta(3)$}, Ast\'erisque {\bf 61} (1979), 11-13.

\bibitem{ASD} A. O. L. Atkin and H. P. F. Swinnerton-Dyer, {\it Modular forms on noncongruence subgroups}, Combinatorics (Proc. Sympos. Pure Math., Vol. XIX, Univ. California, Los Angeles, Calif., 1968), Amer. Math. Soc., Providence, R.I., (1971), 1-25.

\bibitem{FB} F. Beukers, {\it Some congruences for the Ap\'ery numbers}, J. Number Theory {\bf 21} (1985), 141-155.

\bibitem{BD} B. Dwork, {\it $p$-adic cycles}, Inst. Hautes tudes Sci. Publ. Math., {\bf 37} (1969), 27-115.

\bibitem{JFranel} J. Franel, {\it On a question of Laisant}, L'interm\'ediaire des math\'ematiciens {\bf 1} (1894), 45-47.

\bibitem{IMG} I. M. Gessel, {\it Some congruences for generalized Euler numbers}, Canad. J. Math. {\bf 35} (1983), 687-709.

\bibitem{AG} A. Granville, {\it Arithmetic properties of binomial coefficients I: Binomial coefficients modulo prime powers}, CMS Conf. Proc. {\bf 20} (1997), 253-275.

\bibitem{VJWGJZF1} V. J. W. Guo, {\it Proof of two conjectures of Sun on congruences for Franel numbers}, Integral Transforms Spec. Funct. {\bf 24} (2013), 532-539.

\bibitem{VJWG} V. J. W. Guo, {\it $q$-Analogues of Dwork-type supercongruences}, J. Math. Anal. Appl. {\bf 487} (2020), Art. 124022.

\bibitem{VJWGJZF2} V. J. W. Guo and J. Zeng, {\it New congruences for sums involving Ap\'ery numbers or central Delannoy numbers}, Int. J. Number Theory {\bf 8} (2012), 2003-2016.

\bibitem{JCL} J.-C. Liu, {\it Congruences for truncated hypergeometric series ${}_{2}F_1$}, Bull. Aust. Math. Soc. {\bf 96} (2017), 14-23.

\bibitem{ROBSAS} R. Osburn, B. Sahu and A. Straub, \textit{Supercongruences for sporadic sequences}, Proceedings of the Edinburgh Mathematical Society {\bf 59} (2016), 503-518.

\bibitem{AS} A. Straub, {\it Multivariate Ap´ery numbers and supercongruences of rational functions}, Algebra Number Theory. {\bf 8} (2014), 1985-2008.

\bibitem{ZWSF4} Z.-W. Sun, {\it Congruences for Franel numbers}, Adv. in Appl. Math. {\bf 51} (2013), 524-535.

\bibitem{ZWSF2} Z.-W. Sun, {\it On sums of Ap\'ery polynomials and related congruences}, J. Number Theory {\bf 132} (2012), 2673-2699.

\bibitem{ZWSF1} Z.-W. Sun, {\it Open conjectures on congruences}, Nanjing Univ. J. Math. Biquarterly {\bf 36} (2019), 1-99.






\end{thebibliography}
\end{document}